\documentclass[pdflatex,sn-mathphys-num]{sn-jnl}
\usepackage{graphicx} 
\usepackage{xcolor}
\usepackage{amsmath}
\usepackage{amssymb,amsfonts}
\usepackage{fullpage}
\usepackage{algorithm}
\usepackage{algpseudocode}
\usepackage{siunitx}
\usepackage{orcidlink}
\usepackage{aliascnt}

\usepackage{multirow}%
\usepackage{mathrsfs}%
\usepackage{hyperref}

\usepackage[title]{appendix}%
\usepackage{xcolor}%
\usepackage{textcomp}%
\usepackage{manyfoot}%
\usepackage{booktabs}%
\usepackage{algorithmicx}%
\usepackage{listings}%

\newtheorem{theorem}{Theorem}[section]

\newaliascnt{lemma}{theorem}
\newtheorem{lemma}[lemma]{Lemma}
\aliascntresetthe{lemma}

\theoremstyle{definition}
\newaliascnt{definition}{theorem}
\newtheorem{definition}[definition]{Definition}
\aliascntresetthe{definition}

\newcommand{\R}{\mathbb{R}}
\newcommand{\C}{\mathbb{C}}

\newcommand{\bs}{\boldsymbol}

\usepackage[nameinlink,capitalize,noabbrev]{cleveref}

\begin{document}

\title[Article Title]{On high probability of universal approximation in random basis expansions with non-continuous weight sampling}

\author*[1]{\fnm{John E.} \sur{Darges\orcidlink{0009-0007-9059-8921}}}\email{jdarges@emory.edu}

\affil*[1]{\orgdiv{Department of Mathematics}, \orgname{Emory University}, \orgaddress{ \city{Atlanta, Georgia 30322}, \country{United States}}}

\abstract{Random basis expansion (RBE) search the span of a randomly sampled basis to find the best approximation of a target function. They are equivalent to single layer neural networks where the hidden layer weights are chosen randomly. Universal approximation properties have been established for RBEs using continuous weight sampling distributions and real-valued activation functions. Our results extend the universal approximation property to RBEs that use non-continuous weight distributions with dense support in the weight space and that use complex-valued activation functions. The result shows such random bases have the universal approximation property with arbitrarily high probability.}

\keywords{Universal approximation property; random basis expansion; dense weight sampling}

\maketitle

\section{Introduction}
The random basis expansion (RBE) is a tool for representing a target function $f:\C^n\to \C$ in a randomly sampled basis. Compare this against representing a function in a standard ordered basis such as the Fourier basis or an orthogonal polynomial basis.
The concept of a RBE appears various names and forms. These include random feature expansion~\cite{RahimiRecht08,Hashemi23}, neural network with random weights~\cite{broomhead1988multivariable}, random vector functional link~\cite{PaoPark94}, and extreme learning machine~\cite{huang2006universal}. Such an expansion takes the form
\begin{equation}
    \widehat{f}(\bs x) = \sum_{j=1}^m\alpha_j\phi(\bs\omega_j^\top\bs x+b_j),\quad \bs\omega_j\sim\pi,b_j\sim\nu.
\end{equation}
Here, $\phi$ is called the activation function or feature map. The weights $\bs\omega_j$ and biases $b_j$ are independent samples from probability distributions $\pi$ and $\nu$, respectively. Each $\phi(\bs\omega_j^\top\bs x+b_j)$ is termed a feature or random basis function. Unlike in a fully-trained neural network, only the coefficients $\alpha_j$ are tuned to yield an approximation of $f$.
The earliest uses of random bases can be attributed to~\cite{broomhead1988multivariable,Schmidt92} and developed into random vector functional link by~\cite{PaoPark94,PaoIngelnik95} and extreme learning machine~\cite{huang2006universal}.
Random feature expansions emerged independently, in~\cite{RahimiRecht07,RahimiRecht08}, out of the lineage of kernel methods.

Universal approximation properties of RBE jusify their use. A basis expansion has the universal approximation property if any continuous function can be approximated arbitrarily well in the span of said basis. This has been shown for randomly sampled bases in the case where the weight sampling distribution $\pi$ is a continuous one in
~\cite{huang2006universal,Needell22}.
It has been proven under certain boundedness conditions on the activation function and higher moments of the weight sampling distribution in~\cite{Sun19}. Random feature expansion literature emphasizes the universal approximation less, instead justifying through concentration inequalities on the error for target functions coming from a corresponding reproducing kernel Hilbert space. See, for example,~\cite{RahimiRecht08,Saha22,Hashemi23,weidensager2025anova}.

There is recent interest in random basis methods where weights are sampled from a non-continuous distribution. Sparse weight methods~\cite{Hashemi23,darges24,weidensager2025anova} treat weights as a product of continuous and discrete random variables, giving a spike-and-slab weight distribution. Current universal approximation theorems do not cover this case. Random Fourier feature methods rely on a complex-valued activation function, also not considered for universal approximation by random bases. 
We give a proof of universal approximation for random bases with a complex-valued activation function and weight distribution $\pi\times\nu$ that is not continuous, but has dense support in $\R^{n+1}$. The theory is not meant to be constructive. It shows that universally approximating random basis expansions from non-continuous distributions exist.

\section{Universal approximation preliminaries}
The notion of universal approximation is the standard one that appears in in~\cite{Leshno93,Pinkus99}.
\begin{definition}\label{def:univ}
    Let $\mathcal{A}$ denote a collection of basis functions. The collection $\mathcal{A}$ satisfies the universal approximation property if, 
     for any continuous function $f\in\mathcal{C}(\R^n)$ and any compact subset $K\subset\R^n$, there exists $\widehat{f}\in\text{span}(\mathcal{A})$ for any choice of $\epsilon>0$ so that 
     \begin{equation}
     \|f-\widehat{f}\|_{L_\infty(K)}<\epsilon.
     \end{equation}
\end{definition}
In other words, for every compact domain, the span of the basis is dense in the set of continuous functions. This is also known as density on compacta for continuous functions.
A landmark result from Pinkus~\cite{Pinkus99} shows that fully-trained networks satisfy the universal approximation property.
\begin{theorem}[Pinkus~\cite{Pinkus99}]\label{thm:pin_univ}
   Let $\mathcal{A} = \{\phi(\bs\omega^\top \bs x+b):\,\bs\omega\in\R^n,\ b\in\R\}$, with $\phi:\R\to\R$ an activation function. Then $\mathcal{A}$ satisfies ~\cref{def:univ} if and only if $\phi$ is not a polynomial.
\end{theorem}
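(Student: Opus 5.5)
The statement is Pinkus's characterization, and I would prove it along the lines of Leshno–Lin–Pinkus–Schocken and Pinkus~\cite{Leshno93,Pinkus99}. Throughout I assume $\phi$ is continuous, as in the cited source; the statement as written omits this hypothesis, and it is needed for the argument below. Write $\overline{\text{span}}$ for closure in the topology of uniform convergence on compacta.

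\textbf{Necessity.} Suppose $\phi$ is a polynomial of degree $k$. Then every $\phi(\bs\omega^\top\bs x+b)$ is a polynomial in $\bs x$ of degree at most $k$. So $\text{span}(\mathcal{A})$ lies in the finite-dimensional space $\mathcal{P}_k$ of such polynomials. Being finite-dimensional, $\mathcal{P}_k$ is closed in $\mathcal{C}(K)$ for any compact $K$ with nonempty interior, and it is a proper subspace. Hence \cref{def:univ} fails.

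\textbf{Sufficiency: reduction to $n=1$.} Suppose we know that for $n=1$ the closure $\overline{\text{span}}\{\phi(wt+b)\}$ equals $\mathcal{C}(\R)$. Then every ridge function $g(\bs a^\top\bs x)$ with $g\in\mathcal{C}(\R)$ lies in the closure of $\text{span}(\mathcal{A})$, since $\bs a^\top \bs x$ ranges over a compact interval when $\bs x\in K$. The span of such ridge functions contains all polynomials in $\bs x$. Indeed, each homogeneous polynomial of degree $k$ is a linear combination of powers $(\bs a^\top\bs x)^k$, because these powers span the homogeneous polynomials of degree $k$: a functional annihilating all of them would annihilate every $k$-th order directional derivative at $0$. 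Stone–Weierstrass then gives density on every compact $K$.

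\textbf{One dimension, smooth case.} Take $\phi\in C^\infty$ that is not a polynomial. I claim there is a point $b_0$ with $\phi^{(k)}(b_0)\ne0$ for every $k$. Otherwise $\R=\bigcup_k\{\phi^{(k)}=0\}$ is a countable union of closed sets. By Baire, some $\phi^{(k)}$ vanishes on an open interval. A further Baire/maximal-interval argument then forces $\phi$ to be a polynomial on all of $\R$; this step is the most delicate.

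Given such a $b_0$, the difference quotients in $w$ of $\phi(wt+b_0)$ at $w=0$ lie in the span and converge uniformly on compacta in $t$. Their limits, the $k$-th $w$-derivatives at $w=0$, are $t^k\phi^{(k)}(b_0)$, so they lie in the closure. Since $\phi^{(k)}(b_0)\ne0$, every monomial $t^k$ lies in the closure. Weierstrass then finishes this case.

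\textbf{One dimension, general continuous case.} For $\psi\in C_c^\infty$, the convolution $\phi*\psi(wt+b)=\int\phi(wt+b-y)\psi(y)\,dy$ is a uniform-on-compacta limit of Riemann sums. Each Riemann sum lies in the span, so $\phi*\psi$ generates a subset of the closure. If some $\phi*\psi$ is not a polynomial, the smooth case applies to it, and the closure is all of $\mathcal{C}(\R)$.

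Otherwise every $\phi*\psi$ is a polynomial. Restrict to $\psi$ supported in $[-1,1]$; these form a Fréchet space. Baire's theorem shows the degrees are uniformly bounded by some $N$ on this space. Taking $\psi$ to be an approximate identity, $\phi*\psi\to\phi$ uniformly on compacta. Since $\mathcal{P}_N$ is closed, $\phi$ itself is a polynomial, a contradiction.

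The main obstacle is the pair of Baire-category arguments: the smooth case (a nonvanishing-derivatives point exists unless $\phi$ is a polynomial) and the uniform degree bound in the continuous case. Everything else is routine limiting.
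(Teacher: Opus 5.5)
The paper does not prove this statement; it quotes it from Pinkus~\cite{Pinkus99}. Your outline is correct and is essentially Pinkus's own argument: reduce to $n=1$ via ridge functions, handle the smooth case with $w$-derivatives of $\phi(wt+b)$ at $w=0$, and handle the continuous case with mollification and a Baire bound on the degrees of $\phi*\psi$. Your added continuity hypothesis is warranted, because the statement as printed is false without some regularity. For $\phi=\mathbf{1}_{\mathbb{Q}}$, every element of $\text{span}(\mathcal{A})$ is constant off a null set (a countable union of hyperplanes). Such a function cannot approximate $f(\bs x)=x_1$ on $[0,1]^n$ to within $1/4$.

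One simplification: the step you call the most delicate is unnecessary. You do not need a single $b_0$ with $\phi^{(k)}(b_0)\neq 0$ for all $k$. Since $\phi$ is not a polynomial, $\phi^{(k)}\not\equiv 0$ for each fixed $k$, so you may choose $b_k$ with $\phi^{(k)}(b_k)\neq 0$. The $k$-th order $w$-difference quotients at $(w,b)=(0,b_k)$ then put $t^k\phi^{(k)}(b_k)$, hence $t^k$, into the closure. The closure is a linear subspace, so it contains every polynomial, and Weierstrass finishes. This is how Pinkus argues, and it removes the only step you left unexecuted. That step is the classical theorem that a $C^\infty$ function each of whose points is a zero of some derivative must be a polynomial. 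It is true, but it needs a second Baire argument on the closed set where $\phi$ is not locally a polynomial.
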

This theorem is a key lemma for our main result. To handle the case of complex activation functions, we need a generalization of~\cref{thm:pin_univ}. Such a theorem for approximation of complex-valued continuous functions also exists~\cite{voigtlaender2023universal}.
\begin{theorem}[Voigtlaender~\cite{voigtlaender2023universal}]\label{thm:voi_univ}
   Let $\mathcal{A} = \{\phi(\bs\omega^\top \bs x+b):\,\bs\omega\in\C^n,\ b\in\C\}$, where $\phi:\C\to\C$ is locally bounded and continuous almost everywhere. Then $\mathcal{A}$ satisfies the universal approximation property~\cref{def:univ} if and only if $\phi$ is not almost polyharmonic.
\end{theorem}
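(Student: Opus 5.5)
The plan is to treat the two directions separately and to identify $\C^n$ with $\R^{2n}$ throughout, so that density is in $\mathcal{C}(\R^{2n};\C)$ with respect to locally uniform convergence. For necessity, suppose $\phi$ agrees almost everywhere with a function $\psi$ satisfying $\Delta^m\psi=0$. For $g(\bs x)=\psi(\bs\omega^\top\bs x+b)$, the Wirtinger chain rule gives $\partial_{x_k}\partial_{\bar x_k}g=|\omega_k|^2(\partial\bar\partial\psi)(\bs\omega^\top\bs x+b)$. Summing over $k$ yields $\Delta g=\|\bs\omega\|^2(\Delta\psi)\circ(\bs\omega^\top\bs x+b)$, and iterating gives $\Delta^m g=0$. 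Because $\phi$ is locally bounded, the a.e.\ modification does not matter, and I will argue distributionally. The set $\{u:\Delta^m u=0 \text{ in } \mathcal{D}'\}$ is a linear space closed under locally uniform limits. It therefore contains the closure of $\text{span}(\mathcal{A})$. It does not contain, for example, $|x_1|^{2m}$, so \cref{def:univ} fails.

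For sufficiency I will first reduce to smooth activations. Fix a mollifier $\rho_\epsilon\in C_c^\infty(\C)$. Then $(\phi*\rho_\epsilon)(\bs\omega^\top\bs x+b)=\int\phi(\bs\omega^\top\bs x+b-t)\rho_\epsilon(t)\,dt$. The plan is to approximate this integral uniformly on a compact $K$ by Riemann sums, which are elements of $\text{span}(\mathcal{A})$. Uniform convergence of the Riemann sums is exactly where the hypotheses enter: local boundedness of $\phi$ and continuity almost everywhere make $\phi$ Riemann integrable on compacta, in the same way as in the Leshno--Pinkus argument behind \cref{thm:pin_univ}. Hence the closure of $\text{span}(\mathcal{A})$ contains the span built from $\phi*\rho_\epsilon$ for every $\epsilon$. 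It then suffices to show that some smooth $\phi_\epsilon=\phi*\rho_\epsilon$ gives a dense span, which leaves two tasks.

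The first task is the smooth case. Let $\phi$ be $C^\infty$. Apply $\partial_{\omega}^{\bs\alpha}\partial_{\bar\omega}^{\bs\beta}$ to $\phi(\bs\omega^\top\bs x+b)$ at $\bs\omega=0$; these are limits of difference quotients and so lie in the closure of $\text{span}(\mathcal{A})$. The result is $\bs x^{\bs\alpha}\bar{\bs x}^{\bs\beta}\,(\partial^{|\bs\alpha|}\bar\partial^{|\bs\beta|}\phi)(b)$. If $\partial^k\bar\partial^l\phi(b)\neq0$ for some $b$ and every pair $(k,l)$, then all monomials $\bs x^{\bs\alpha}\bar{\bs x}^{\bs\beta}$ lie in the closure. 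Since these monomials form a self-adjoint point-separating algebra, Stone--Weierstrass gives density on every compact $K$. Now suppose instead that $\partial^k\bar\partial^l\phi\equiv0$ for some $(k,l)$. Applying $\partial^{j-k}\bar\partial^{j-l}$ with $j=\max(k,l)$ gives $\Delta^j\phi=0$ up to a constant factor, so $\phi$ is polyharmonic.

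The second task, which I expect to be the main obstacle, is the passage back from smooth to general $\phi$. I need that if every $\phi*\rho_\epsilon$ is polyharmonic, then $\phi$ is almost polyharmonic. The difficulty is that the order could a priori grow as $\epsilon\to0$. I would handle this by a Baire category argument as in Leshno et al. Work with $\phi*\rho$ for $\rho$ ranging over the Fréchet space $C_c^\infty(B)$ for a fixed ball $B$. The sets $F_m=\{\rho:\Delta^m(\phi*\rho)=0\}$ are closed and cover the space, so some $F_m$ has nonempty interior. Because each $F_m$ is a linear subspace, a subspace with nonempty interior is the whole space, so $\Delta^m(\phi*\rho)=0$ for all $\rho$ with a uniform $m$. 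Letting $\rho\to\delta$ then gives $\Delta^m\phi=0$ in $\mathcal{D}'$. Weyl's lemma for the polyharmonic operator upgrades this to $\phi$ agreeing a.e.\ with a polyharmonic function. This contradicts the hypothesis, so some mollification is non-polyharmonic and the smooth case finishes the proof.
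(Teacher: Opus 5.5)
The paper gives no proof of this statement. It is quoted from Voigtlaender and used only as a black box in the proof of \cref{thm:my_univ}, so the comparison is between a citation and your self-contained argument.

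Your argument is correct. It is the natural complex analogue of the Leshno--Lin--Pinkus--Schocken proof:
\begin{itemize}
\item Necessity uses $\Delta\bigl(\psi(\bs\omega^\top\bs x+b)\bigr)=\|\bs\omega\|^2(\Delta\psi)(\bs\omega^\top\bs x+b)$ and the witness $|x_1|^{2m}$, for which $\Delta^m|x_1|^{2m}=4^m(m!)^2\neq0$.
\item Sufficiency mollifies, then generates all $\bs x^{\bs\alpha}\bar{\bs x}^{\bs\beta}$ from Wirtinger derivatives in $\bs\omega,\bar{\bs\omega}$ at $\bs\omega=0$.
\item You observe that $\partial^k\bar\partial^l\phi_\rho\equiv0$ forces $\Delta^{\max(k,l)}\phi_\rho=0$.
\item Baire plus hypoellipticity of $\Delta^m$ then gives a uniform order.
\end{itemize}
The citation buys brevity. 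Your route shows exactly where each hypothesis is used: local boundedness and a.e.\ continuity enter only in the Riemann-sum step, and Baire is needed only to stop the polyharmonic order of $\phi*\rho$ from growing with $\rho$.

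Two details deserve a line each when you write it out.

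First, the Riemann-sum step is not literally the Leshno--Pinkus one, which uses that the closure of the discontinuity set is null. Under mere a.e.\ continuity, Riemann integrability of each $t\mapsto\phi(s-t)\rho(t)$ gives convergence only for each fixed $s$. Uniformity over $s$ in the compact set $\{\bs\omega^\top\bs x+b:\bs x\in K\}$ follows because $\{u:\operatorname{osc}_\phi(u)\geq\eta\}$ is closed and null. It can therefore be covered by an open set of small area, and every sufficiently small grid cell either lies in that open set or has oscillation below $\eta$, with the size threshold independent of the shift $s$.

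Second, run the Baire argument in the Fr\'echet space $\{\rho\in C^\infty(\C):\operatorname{supp}\rho\subset\overline{B}\}$ rather than in $C_c^\infty(B)$, which is not complete. There $F_m$ is closed because each map $\rho\mapsto(\phi*\Delta^m\rho)(s)$ is continuous.
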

Being not almost polyharmonic is analogous to being not a polynomial. A function $\phi$ is almost polyharmonic if $\phi=g$ almost everywhere with $\Delta^mg=0$, where $\Delta^m$ is a higher-order Laplace operator. We replace~\cref{thm:pin_univ} with~\cref{thm:voi_univ} for the complex case.
Both theorems are non-constructive results that guarantee the existence of \textit{some} expansion that arbitrarily approximations $f$.
We prove a lemma before we articulate the main universal approximation result. The lemma relates the distance between a pair of weights to the distance between  the basis functions corresponding to those weights.
\begin{lemma}\label{lem:arzela}
Let $\phi:\R\to\C$ be continuous. 
Let $\{(\bs \omega_j,b_j)\}_{j=1}^\infty\subset\R^{n+1}$ be a sequence 
that converges to $(\bs \omega^*,b^*)\in\R^{n+1}$. Then, for any compact subset $K\subset\R^n$,
\begin{equation}\label{equ:lem_arz}
\lim_{j\to\infty}\|\phi_j-\phi^*\|_{L_\infty(K)} = 0,
\end{equation}
where $\phi_j(\bs x) = \phi(\bs{\omega}_j^\top\bs{x}+b_j)$ and $\phi^*(\bs x) = \phi({\bs{\omega}^*}^\top\bs{x}+b^*)$.
\end{lemma}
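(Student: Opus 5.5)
The plan is to reduce the statement to uniform continuity of $\phi$ on a suitable compact interval. The arguments $\bs\omega_j^\top\bs x+b_j$ stay in a bounded set uniformly in $j$ and $\bs x\in K$, and they converge uniformly on $K$ to ${\bs\omega^*}^\top\bs x+b^*$. Composing with a uniformly continuous function then gives \eqref{equ:lem_arz}.

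\textbf{Step 1 (bounded arguments).} Since $K$ is compact, set $R=\sup_{\bs x\in K}\|\bs x\|<\infty$. A convergent sequence is bounded, so there is $M$ with $\|\bs\omega_j\|\le M$ and $|b_j|\le M$ for all $j$, and also $\|\bs\omega^*\|\le M$ and $|b^*|\le M$. By Cauchy--Schwarz, for every $\bs x\in K$ and every $j$, both $\bs\omega_j^\top\bs x+b_j$ and ${\bs\omega^*}^\top\bs x+b^*$ lie in the compact interval $I=[-M(R+1),\,M(R+1)]$.

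\textbf{Step 2 (uniform convergence of arguments).} For $\bs x\in K$,
\begin{equation*}
\bigl|(\bs\omega_j^\top\bs x+b_j)-({\bs\omega^*}^\top\bs x+b^*)\bigr|\le R\|\bs\omega_j-\bs\omega^*\|+|b_j-b^*|.
\end{equation*}
The right-hand side does not depend on $\bs x$ and tends to $0$ as $j\to\infty$.

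\textbf{Step 3 (uniform continuity).} The function $\phi$ is continuous on $\R$ with values in $\C$. By Heine--Cantor, it is uniformly continuous on the compact interval $I$. Fix $\epsilon>0$ and choose $\delta>0$ such that $|\phi(s)-\phi(t)|<\epsilon$ whenever $s,t\in I$ and $|s-t|<\delta$. By Step 2, there is $J$ such that for $j\ge J$ the bound there is below $\delta$. Then, for all $\bs x\in K$,
\begin{equation*}
|\phi_j(\bs x)-\phi^*(\bs x)|<\epsilon.
\end{equation*}
Hence $\|\phi_j-\phi^*\|_{L_\infty(K)}\le\epsilon$ for $j\ge J$, which proves \eqref{equ:lem_arz}.

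The only real subtlety is Step 1. Uniform continuity must be applied on a single compact set that contains every argument for every $j$, not just the limit's arguments, because $\phi$ need not be uniformly continuous on all of $\R$. The uniform bound $M$ on the convergent sequence handles this. An alternative route would use Arzel\`a--Ascoli on the equicontinuous family $\{\phi_j\}$ together with pointwise convergence, but the direct argument above is simpler.
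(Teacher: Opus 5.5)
Your proof is correct, and it takes a more direct route than the paper's.

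The paper's route is this. It places the sequence in a compact set $Q$ and uses uniform continuity of $\phi$ on the resulting compact set of arguments $\bs\omega^\top\bs x+b$ to show that $\{\phi_j\}$ is equicontinuous in $\bs x$ and uniformly bounded on $K$. It then invokes Arzel\`a--Ascoli together with pointwise convergence to $\phi^*$.

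You use the same essential ingredient: your Step~1, a single compact interval $I$ containing all arguments, on which $\phi$ is uniformly continuous. But you pair it with the uniform-in-$\bs x$ convergence of the arguments in Step~2 rather than with equicontinuity in $\bs x$, so no compactness theorem is needed.

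This buys two things. First, it avoids a step the paper passes over quickly. Arzel\`a--Ascoli yields only \emph{some} uniformly convergent subsequence, and upgrading that plus pointwise convergence to uniform convergence of the whole sequence needs an extra argument. One option is to apply Arzel\`a--Ascoli to every subsequence and use uniqueness of the limit. Another is to quote the fact that an equicontinuous, pointwise convergent sequence on a compact set converges uniformly.

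Second, your argument gives an explicit estimate $\|\phi_j-\phi^*\|_{L_\infty(K)}\le\varpi(R\|\bs\omega_j-\bs\omega^*\|+|b_j-b^*|)$, where $\varpi$ is the modulus of continuity of $\phi$ on $I$. Applied to arbitrary $(\bs\omega,b)$ in a fixed ball around $(\bs\omega^*,b^*)$, the same computation gives directly the $\delta$-neighborhood statement that the proof of Theorem~\ref{thm:my_univ} uses, with no need to pass from the sequential formulation.

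Since the paper already relies on uniform continuity of $\phi$ on a compact argument set to get equicontinuity, the Arzel\`a--Ascoli detour adds nothing here.
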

\begin{proof}
We will proceed by showing that $\{\phi_j\}_{j=1}^\infty$ is equicontinuous and uniformly bounded and use 
the Arzel\`{a}-Ascoli theorem to obtain the desired result. 

Fix $\epsilon > 0$. Because $\{(\bs \omega_j,b_j)\}_{j=1}^\infty$ is a convergent sequence, it is contained in some 
compact set $Q\subset\R^{n+1}$. We then denote
\begin{equation}
M = \max_{\begin{subarray}{l} (\bs \omega,b)\in Q,\\ 
   \ \bs x\in K\end{subarray}} |\phi(\bs \omega^\top\bs x + b)|.
\end{equation} 
As $\phi$ is continuous, it is uniformly continuous on the compact set $T\subset\C$ bounded by $M$. For every $t,\hat t\in T$, 
there exists $\delta^*>0$ so that $|t-\hat t|<\delta^*$ implies $|\phi(t) - \phi(\hat t)|<\epsilon$.

We now show equicontinuity. First, 
\begin{equation}
\sup_{j}\|\bs \omega_j\| =  \sup_{j}\|(\bs \omega_j,0)\| \leq \max_{(\bs \omega, b)\in Q}\|(\bs \omega, b)\|.
\end{equation}
Let $\alpha = \max_{(\bs \omega, b)\in Q}\|(\bs \omega, b)\|$. Then, for any $j>0$, any $\bs x_1,\bs x_2\in\R^n$, 
and our same fixed $\epsilon>0$, let $\delta < \frac{\delta^*}{\alpha}$. Using the Cauchy-Schwartz inequality,
$\|\bs x_1- \bs x_2\|<\delta$ implies 
\begin{equation}
   \begin{aligned}
|{\bs{\omega}_j}^\top\bs x_1 + b - ({\bs{\omega}_j}^\top\bs x_2 + b)| & = |{\bs{\omega}_j}^\top(\bs x_1 - \bs x_2| \\
 \leq \|\bs \omega_j\|\cdot\|\bs x_1 - \bs x\| &
  < \|\bs{\omega}_j\| \delta < \frac{\|\bs{\omega}_j\|}{\alpha} \delta^* < \delta^*.
\end{aligned}
\end{equation}
Letting $t = \bs \omega_j^\top\bs x_1 + b$ and $\hat{t}= \bs \omega_j^\top\bs x_2 + b$, this implies 
$|\phi(t) - \phi(\hat{t}
)|<\epsilon$. Then $\{\phi_j\}_{j=1}^\infty$ is equicontinuous. Moreover, 
the sequence is uniformly bounded by $M$. The Arzel\`{a}-Ascoli theorem then implies 
that there exists a uniformly convergent subsequence. However, the continuity of $\phi$ implies
that $\{\phi_j\}_{j=1}^\infty$ converges pointwise to $\phi^*$. Therefore, $\{\phi_j\}_{j=1}^\infty$
converges uniformly to $\phi^*$. Since all $\{\phi_j\}_{j=1}^\infty$ and $\phi^*$ are continuous, 
uniform convergence on $K$ implies convergence in $L_\infty(K)$.
\end{proof}

\section{Universal approximation with high probability}
When using a distribution with dense support in $\R^{n+1}$, we are highly likely to sample basis functions that are arbitrarily close to those that appear in expansions guaranteed by~\cref{thm:pin_univ} and~\cref{thm:voi_univ}.
Relying on~\cref{lem:arzela} the below result shows this.
\begin{theorem}\label{thm:my_univ}
Let $f:\C^n\to\C$ be a continuous function and let the activation function be a  continuous non-polyharmonic function $\phi:\R\to\C$.
For every compact subset $K\subset\C^n$ and every $\epsilon>0$, the following holds: 
when $m\geq\log\Big(\frac{1}{\big(1-(1-\eta)^{1/q}\big)^q}-\frac{1}{1-C}\Big)$, then 
\begin{equation}\label{equ:thm_my}
   \Big\|f-\sum_{j=1}^{m}\alpha_j\phi_j\Big\|_{L_\infty(K)}<\epsilon,
\end{equation}
holds with probability $1-\eta$, where $\phi_j(\bs x) = \phi(\bs \omega_j^\top \bs x + b_j)$ and 
the coefficients $\{\alpha_j\}_{j=1}^{m}$ depend on $(\bs \omega_j,b_j)_{j=1}^{m}$. 
Here, each $(\bs \omega_j,b_j)_{j=1}^{m}$ is sampled independently and identically from a probability 
distribution $\mathcal{D}$ whose support is dense in $\R^{n+1}$. The constant 
$C=C(f,\epsilon,\mathcal{D})$ depends on 
$\epsilon$, $f$, and the sampling distribution $\mathcal{D}$. The constant $q=q(f,\epsilon)$ 
depends on $\epsilon$ and $f$.
\end{theorem}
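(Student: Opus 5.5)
The plan is to transfer the deterministic existence result to the random setting, using the density of the support of $\mathcal{D}$ and \cref{lem:arzela}. First, since $\phi$ is continuous and not polyharmonic, \cref{thm:voi_univ} (or \cref{thm:pin_univ} in the real case) gives, for the fixed $f$, $K$ and $\epsilon$, a finite expansion
\[
g=\sum_{i=1}^{q}\beta_i\,\phi\big({\bs\omega_i^*}^\top\bs x+b_i^*\big), \qquad \|f-g\|_{L_\infty(K)}<\epsilon/2 .
\]
The number of terms $q$ and the target weights $(\bs\omega_i^*,b_i^*)$ depend only on $f$ and $\epsilon$, which is why $q=q(f,\epsilon)$. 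If some $\beta_i=0$, drop that term; if all vanish, the claim is trivial. Set $B=\sum_i|\beta_i|$.

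Second, I use \cref{lem:arzela} in its equivalent $\delta$-form. For each $i$ there is a radius $r_i>0$ such that every $(\bs\omega,b)$ in the ball $U_i=B((\bs\omega_i^*,b_i^*),r_i)$ satisfies
\[
\|\phi(\bs\omega^\top\bs x+b)-\phi({\bs\omega_i^*}^\top\bs x+b_i^*)\|_{L_\infty(K)}<\frac{\epsilon}{2B}.
\]
If this failed for every radius, a sequence converging to $(\bs\omega_i^*,b_i^*)$ would contradict the lemma. Because the support of $\mathcal{D}$ is dense in $\R^{n+1}$ and $U_i$ is open, each $U_i$ meets the support, so $p_i:=\mathcal{D}(U_i)>0$. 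Let $p=\min_i p_i$. I identify $C:=1-p<1$; it depends on $f$, $\epsilon$ and $\mathcal{D}$, as the statement requires.

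Third comes the probabilistic step. Suppose that for every $i$ at least one sample $(\bs\omega_{j_i},b_{j_i})$ lands in $U_i$. Then choose $\alpha_{j_i}=\beta_i$ and set all other $\alpha_j$ to zero. If two indices pick the same $j$ (possible only when the balls overlap), shrink the radii so that they are disjoint, or add the coefficients. By the triangle inequality,
\[
\|f-\textstyle\sum_j\alpha_j\phi_j\|_{L_\infty(K)}<\frac{\epsilon}{2}+B\cdot\frac{\epsilon}{2B}=\epsilon .
\]
So the failure event is contained in the union over $i$ of the events that no sample lands in $U_i$. The $m$ samples are i.i.d., so each of these events has probability $(1-p_i)^m\le C^m$. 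A union bound then gives failure probability at most $qC^m$, or, more crudely and closer to the paper's form, success probability at least $(1-C^m)^q$.

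Finally, I solve for $m$. Requiring $(1-C^m)^q\ge 1-\eta$ is equivalent to $C^m\le 1-(1-\eta)^{1/q}$, which gives $m\ge \log\big(1-(1-\eta)^{1/q}\big)/\log C$. The main obstacle is reconciling this with the exact expression in the statement, $\log\big(\big(1-(1-\eta)^{1/q}\big)^{-q}-\frac{1}{1-C}\big)$. That expression looks like it comes from a geometric-series bound such as $\sum_{k<m}C^k=\frac{1-C^m}{1-C}$, combined with a particular choice of logarithm base. I would first derive the clean bound above and then check whether the stated threshold implies it, possibly after adjusting how $C$ is defined (for instance through $p$, or through a base-$1/C$ logarithm). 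The constants are not claimed to be sharp, so any threshold of the same form depending only on $q$, $\eta$ and $C$ completes the proof.
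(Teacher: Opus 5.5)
\textbf{Assessment.} There is a genuine gap: the displayed threshold is never established, and in fact it cannot be. There is also a false product bound on the way.

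\textbf{What is sound, and how your route differs.} Modulo one caveat you share with the paper, your first two steps and the union-bound part of the third are sound. In places they are more careful than the paper: you control the coefficients by $B=\sum_i|\beta_i|$ (the paper's $\alpha^*=\max_i\tilde\alpha_i$ makes no sense for complex or negative coefficients), you say explicitly why density of the support forces $\mathcal{D}(U_i)>0$, and you use $p=\min_i p_i$, whereas the paper's $v^*=\max_i v_i$ gives an upper rather than a lower bound on the hit probability.

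Your probabilistic route also differs from the paper's. The paper splits the $m$ samples into $q$ disjoint blocks of size $l=m/q$, one per target, so the hit events are independent and the success probability is at least $(1-C^{m/q})^q$. You let every sample serve every target and take a union bound, giving failure probability at most $qC^m$. That yields the threshold $m\ge\log(q/\eta)/\log(1/C)$, roughly a factor $q$ better.

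\textbf{The false product bound.} Your aside that, ``more crudely,'' the success probability is at least $(1-C^m)^q$ is wrong. Since $(1-C^m)^q\ge 1-qC^m$, this is a stronger claim, not a cruder one. With shared samples it fails: for disjoint balls the hit events are negatively correlated. With $q=2$ and $m=1$ the success probability is $0$, while $(1-C)^2>0$. A product bound needs the paper's block splitting, and then the exponent is $m/q$. Consequently your ``clean'' threshold $m\ge\log\big(1-(1-\eta)^{1/q}\big)/\log C$ is unjustified. By Bernoulli's inequality $q\big(1-(1-\eta)^{1/q}\big)\ge\eta$, so that threshold does not even imply $qC^m\le\eta$.

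\textbf{The caveat on step one.} Voigtlaender's theorem concerns $\phi:\C\to\C$ with weights in $\C^{n+1}$. It therefore does not give target weights in $\R^{n+1}$, where the support of $\mathcal{D}$ lives. For real $K$, run the Leshno--Pinkus argument for complex-valued non-polynomial $\phi$ on $\R$ instead; it goes through unchanged.

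\textbf{The genuine gap: the final threshold.} The theorem asserts the specific threshold $m\ge\log\big((1-(1-\eta)^{1/q})^{-q}-\frac{1}{1-C}\big)$. Saying that any threshold of the same form completes the proof does not establish it. The reconciliation you propose cannot succeed for any $C=C(f,\epsilon,\mathcal{D})$. For fixed $C$, this quantity equals $q\log(q/\eta)+o(1)$ as $\eta\to0$, so its growth does not depend on $\mathcal{D}$ at all. Yet $\mathcal{D}$ can force the required number of samples to grow much faster.

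Here is a concrete counterexample. Take $f(\bs x)=x_1$, $K=[-1,1]^n$ and $\epsilon=1$, which fixes $q$. Take $\mathcal{D}=(1-s)\,\delta_{\bs 0}+s\,\mathcal{N}(\bs 0,I_{n+1})$, where $\delta_{\bs 0}$ is the point mass at the origin; this has full support.
\begin{itemize}
\item With probability $(1-s)^m$ every sample is $\bs 0$, so every expansion is constant and the error is at least $1$.
\item One therefore needs $m\ge\log(1/\eta)/\log\frac{1}{1-s}$.
\item Once $q\log\frac{1}{1-s}<1$, this exceeds the stated threshold for all small $\eta$.
\end{itemize}

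\textbf{Where the paper goes wrong.} The paper reaches its formula only through invalid steps: the $\max$ in $v^*$, the condition flipped to $\le 1-\eta$, and a final ``this is satisfied when'' that does not follow. In fact $(1-(1-v)^{m/q})^q\ge1-\eta$ is equivalent to $m\ge q\log\big(1-(1-\eta)^{1/q}\big)/\log(1-v)$.

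\textbf{What you have actually proved.} Your argument proves the corrected theorem with $C=1-\min_i p_i$ and threshold $m\ge\log(q/\eta)/\log(1/C)$. State that explicitly rather than claiming the displayed bound.
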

\begin{proof}
Given $\epsilon>0$, universal approximation properties of $\phi$ guarantee that for some $q>0$, there exists an expansion that
approximates $f$ within $\epsilon/2$,
\begin{equation}
   \Big\|f-\sum_{i=1}^q\tilde{\alpha}_i\tilde{\phi}_i\Big\|_{L_\infty(K)}<\frac{\epsilon}{2},
\end{equation}
where $\tilde{\phi}_i(\bs x) = \phi(\tilde{\bs{\omega}}_i^\top\bs x + \tilde{b}_i)$. When $f:\R^n\to\R$ and $\phi:\R\to\R$, this follows from~\cref{thm:pin_univ}. If $f:\C^n\to\C$ and $\phi:\R\to\C$, this follows from~\cref{thm:voi_univ}.
Let $\alpha^* = \max_i\tilde{\alpha}_i$. Now,~\cref{lem:arzela} 
implies that some $\delta_i$ exists for each $\tilde{\phi}_i$ so that $\|(\bs{\omega},b) - (\tilde{\bs{\omega}}_i,\tilde{b})\|<\delta_i$
means that $\|\phi^* - \tilde{\phi}_i\|_{L_\infty(K)}<\frac{\epsilon}{2q\alpha^*}$ for $\phi^*(\bs x) = \phi(\bs{\omega}^\top\bs x + b)$, with
$(\bs{\omega},b)\in\R^{n+1}$. 

For each $i$, consider a collection of $l$ independent samples $\{(\bs{\omega}_k,b_k)\}_{k=1}^l$ from $\mathcal{D}$. 
Let $\mathbb{P}_{\mathcal{D}}$ denote the probability measure of $\mathcal{D}$. 
The probability that a sample from $\mathcal{D}$ is in the  $\delta_i$-neighborhood of $(\tilde{\bs{\omega}}_i,\tilde{b})$ is 
$v_i=\mathbb{P}_{\mathcal{D}}\big(B_{\delta_i}(\tilde{\bs{\omega}}_i,\tilde{b})\big)$. Let $v^* = \max_i v_i$. 
The probability that at least 
one random $(\bs{\omega}_k,b_k)$ belongs to the $\delta_i$-neighborhood of $(\tilde{\bs{\omega}}_i,\tilde{b})$ is 
\begin{equation}
\begin{aligned}
1 - \prod_{k=1}^l \mathbb{P}_{\mathcal{D}}(\|(\bs{\omega}_k,b_k) -(\tilde{\bs{\omega}}_i,\tilde{b})\|\geq\delta_i) 
& = 1 - \prod_{k=1}^l (1 - v_i) = 1 - (1- v_i)^l\\
& \leq 1 - (1 - v^*)^l.
\end{aligned}
\end{equation}
Some random $(\bs{\omega}_k,b_k)$ is in each $\delta_i$-neighborhood
 with probability at least $1-\eta$, if $q,l$ satisfy
\begin{equation}
   \big(1 - (1 - v^*)^l\big)^q \geq 1 - \eta.
\end{equation}
To obtain a sufficient bound based on the total number of sampled weights, $m = q\cdot l$, we require $m$ so that
\begin{equation}
\big(1 - (1 - v^*)^{m/q}\big)^q \leq 1 - \eta.
\end{equation}
This is satisfied when
\begin{equation}
m \geq \log\Big(\frac{1}{\big(1-(1-\eta)^{1/q}\big)^q} - \frac{1}{1-v^*}\Big).
\end{equation}
We choose the coefficients $\{\alpha_j\}_{j=1}^m$ in the following manner: for $k=1,\dots,l$ and $i=1,\dots,n$,
consider the collection of $l$ samples $\mathcal{J}_i = \{(\bs{\omega}_j,b_j)\}_{j = (i-1)l + 1}^{i\cdot l}$ corresponding to 
$i = 1,\dots, q$. For each $\mathcal{J}_i$, 
if $(\bs{\omega}_j,b_j)$ has the minimal distance to $(\tilde{\bs{\omega}}_i,\tilde{b})$ over $\mathcal{J}_i$, set $\alpha = \tilde{\alpha}_i$. 
Otherwise, set $\alpha_j = 0$.

Consider $\sum_{j=1}^m\alpha_j\phi_j$ where $\phi_j(\bs x) = \phi(\bs\omega_j^\top\bs x + b_j)$. When 
$m \geq \log\Big(\frac{1}{\big(1-(1-\eta)^{1/q}\big)^q} - \frac{1}{1-v^*}\Big)$, we have with probability at least
$1-\eta$ that 
\begin{equation}\label{equ:mythm_bound1}
\begin{aligned}
\Big\|f - \sum_{j=1}^m\alpha_j\phi_j\Big\|_{L_\infty(K)} & = \Big\|f -
 \sum_{i=1}^q\tilde{\alpha}_i\tilde{\phi}_i + \sum_{i=1}^q\tilde{\alpha}_i\tilde{\phi}_i -\sum_{j=1}^m\alpha_j\phi_j\Big\|_{L_\infty(K)} \\
 & \leq \Big\|f -
 \sum_{i=1}^q\tilde{\alpha}_i\tilde{\phi}_i \Big\|_{L_\infty(K)} + 
 \Big\|\sum_{i=1}^q\tilde{\alpha}_i\tilde{\phi}_i -\sum_{j=1}^m\alpha_j\phi_j\Big\|_{L_\infty(K)}  \\
 & < \frac{\epsilon}{2} +  \Big\|\sum_{i=1}^n\tilde{\alpha}_i\tilde{\phi}_i -\sum_{j=1}^m\alpha_j\phi_j\Big\|_{L_\infty(K)} .
\end{aligned}
\end{equation}
Let $\{j_i\}_{i=1}^n$ denote the indices corresponding to nonzero coefficients $\alpha_{j_i} = \tilde{\alpha}_i$. 
Then, 
\begin{equation}\label{equ:mythm_bound2}
   \begin{aligned}
  \Big\|\sum_{i=1}^q\tilde{\alpha}_i\tilde{\phi}_i -\sum_{j=1}^m\alpha_j\phi_j\Big\|_{L_\infty(K)} = &
\Big\|\sum_{i=1}^q(\tilde{\alpha}_i\tilde{\phi}_i - \alpha_{j_i}\phi_{j_i})\Big\|_{L_\infty(K)} \\
  =  \Big\|\sum_{i=1}^q\tilde{\beta}_i(\tilde{\phi}_i - \phi_{j_i})\Big\|_{L_\infty(K)} 
  &\leq  \sum_{i=1}^q \tilde{\beta}_i\|\tilde{\phi}_i - \phi_{j_i}\|_{L^\infty(K)}\\
  <  \sum_{i=1}^q \tilde{\alpha}_i \frac{\epsilon}{2q\alpha^*} \leq \frac{\epsilon}{2}&.
   \end{aligned}
\end{equation} 
Then~\eqref{equ:mythm_bound1} and~\eqref{equ:mythm_bound2} imply that 
\begin{equation}
   \Big\|\|f - \sum_{j=1}^m\alpha_j\phi_j\Big\|_{L_\infty(K)} < \frac{\epsilon}{2} + \frac{\epsilon}{2} =  \epsilon.
\end{equation}
\end{proof}
A non-continuous  distribution can yield a collection of random basis functions whose span is dense in the span of all basis functions of the same form. The well-established universal approximation theorems for fully-trained networks guarantee density in the collection of continuous functions.

\section{Conclusion}\label{sec:conclusion}
Random basis expansions (RBEs) approximate a target function by a weighted sum of feature functions evaluated at randomly sampled weights and biases, with only the coefficients tuned to fit the data. Existing universal approximation results for RBEs require the weight-sampling distribution to be continuous, a condition violated by the sparse, spike-and-slab-type sampling schemes. We extend the universal approximation property to weight non-continuous distributions withe dense support in $\R^{n+1}$, covering real-valued activation functions that are not polynomials and complex-valued activation functions that are not almost polyharmonic. We show that an RBE with 
randomly sampled features approximates any continuous function on a compact domain to within any prescribed accuracy with high probability, depending on the target function, the desired accuracy, and the sampling distribution. The result shows existence, establishing that universally approximating random basis expansions from non-continuous weight distributions exist.

\bmhead{Acknowledgments} This work is partially supported by  by the National Science Foundation (NSF) under grant number 2038118.

\bibliography{refs}
\end{document}